\begin{document}

\title{Control Barrier Functions for Stochastic Systems and Safety-critical Control Designs}
\author{Y\^uki Nishimura\footnotemark[1] \ and Kenta Hoshino\footnotemark[2]}
\date{September 19, 2022; Revised April 17, 2024\footnotemark[3]}

\renewcommand{\thefootnote}{\fnsymbol{footnote}}
\footnotetext[1]{Kagoshima University}
\footnotetext[2]{Kyoto University}
\footnotetext[3]{This work has been submitted to the IEEE for possible publication. Copyright may be transferred without notice, after which this version may no longer be accessible.}

\renewcommand{\thefootnote}{\arabic{footnote}}
\maketitle

\begin{abstract}
In recent years, the analysis of a control barrier function has received considerable attention because it is helpful for the safety-critical control required in many control application problems. While the  extension of the analysis to a stochastic system studied by many researchers, it remains a challenging issue. In this paper, we consider sufficient conditions for reciprocal and zeroing control barrier functions ensuring safety with probability one and design a control law using the functions. Then, we propose another version of a stochastic zeroing control barrier function to evaluate a probability of a sample path staying in a safe set and confirm the convergence of a specific expectation related to the attractiveness of a safe set. We also show a way of deisgning a safety-critical control law based on our stochastic zeroing control barrier function. Finally, we confirm the validity of the proposed control design and the analysis using the control barrier functions via simple examples with their numerical simulation. 
\end{abstract}

\section{Introduction}

In recent control application problems, opportunities for demands for sophisticated machine behavior and machine-to-human contact have increased. Because the problems require machine behavior to stay within a safe range for the machine itself and humans, the center of the control design guidelines is changing from stability to {\it safety}. The transition is theoretically realized by the change from stabilization based on a control Lyapunov function to safety-critical control based on a control barrier function (CBF) {\cite{ames2017,ames2019}}. In the last few years, research results on a CBF have been actively reported { with various control application problems, and in reality, the simple realization of seemingly complicated commands \cite{ames2019,shimizu2022}, and human assist control \cite{furusawa2021,nakamura2019,tezuka2022} are being promoted}.

In the context of a CBF, the control objective is to make a specific subset, which is said to be a {\it safe set}, on the state space invariance forward in time (namely, forward invariance \cite{ames2019}). { There are various types of CBFs, the most commonly used currently are a reciprocal control barrier function (RCBF) \cite{ames2019,furusawa2021,nakamura2019} and a zeroing control barrier function (ZCBF) \cite{ames2019,shimizu2022,tezuka2022}: the RCBF is a positive function that diverges from the inside of the safe set toward the boundary, while the ZCBF is a function that is zero at the boundary of the safe set. The RCBF has a form that is easy to imagine as a barrier, while the ZCBF is defined outside the safe set, allowing the design of control laws with robustness.}

{The} aforementioned literature \cite{ames2017,ames2019,shimizu2022,furusawa2021,nakamura2019,tezuka2022} focuses on systems without stochastic disturbances. Because a stochastic disturbance often affects a real system, a safe set is desirable to maintain invariance even when {influenced by the} disturbance. Recently, various types of CBF-based stochastic safety-critical control have been proposed in \cite{prajna2007,santoyo2019,wisniewski2021,jagtap2021,salamati2023,clark2021,wang2021,bai2022,nejati2022}. 
Jagtap et al. \cite{jagtap2021} conducts a systematic and detailed study, and then it is developed into a data-driven framework by Salamati and Zamani \cite{salamati2023}. Prajna et al. \cite{prajna2007} provides a safety verification procedure, and then it is developed to control design procedure by Santoyo et al. \cite{santoyo2019}. Wisniewski and Bujorianu \cite{wisniewski2021} also discuss in detail safety in an infinite time-horizon named $p$-stability. {Bai et al. \cite{bai2022} analyzes a probability for a trajectory to reach a target set, which is a subset of a safe set. Nejati et al. \cite{nejati2022} develop a compositional approach for constructing CBFs for stochastic hybrid systems, which forms an excellent theory in terms of applications because they use numerical methods such as the sum-of-squares optimization program under the free design of safe sets. }

{On the other hand, the CBF approach is closely related to a control Lyapunov function (CLF), which immediately provides a stabilizing control law from the CLF, as in Sontag \cite{sontag} for deterministic systems and Florchinger \cite{florchinger} for stochastic systems. Therefore, in the CBF approach, the derivation of a safety-critical control law immediately from the CBF is also important. For this discussion, the problem setting in which the safe set is coupled with the CBF is appropriate, as in Ames et al. \cite{ames2019}.} {The stochastic version of the Ames's et al.'s result is recently discussed} by Clark \cite{clark2021}; he insists that his RCBF and ZCBF guarantee the safety of a set with probability one. At the same time, Wang et al. \cite{wang2021} analyze the probability of a time when the sample path leaves a safe set under conditions similar to Clark's ZCBF. Wang et al. also claim that a state-feedback law achieving safety with probability one often diverges toward the boundary of the safe set; the inference is also obtained from the fact that the conditions for the existence of an invariance set in a stochastic system are strict and influenced by the properties of the diffusion coefficients \cite{nishimura2016scl}. {This argument is in the line of stochastic viability by Aubin and Prato \cite{aubin1995}. For CBFs, Tamba et al.~\cite{tamba2021} provides sufficient conditions for safety with probability one, which require difficult conditions for the diffusion coefficients.} Therefore, we need to reconsider a sufficient condition of safety with probability one, and we also need to rethink the problem setup to compute the safety probability obtained by a bounded control law.

In this paper, we propose a way of analyzing safety probability for a stochastic system via a CBF approach. The contributions of this paper are as follows. First, we propose an almost sure reciprocal control barrier function (AS-RCBF) ensuring the safety of a set with probability one, which is considered as a stochastic version of an extended RCBF in \cite{nakamura2019}; see also \cite{furusawa2021} (and note that the condition is relaxed {around the boundary of the safe set} compared with an RCBF in \cite{ames2017}). Second, we propose an almost sure zeroing control barrier function (AS-ZCBF) satisfying an inequality somewhat different from the one in \cite{clark2021}. Then, we suggest a new stochastic ZCBF for {calculating a probability that a trajectory achieves a designed subset of a safe set before leaving the safe set}. Our stochastic ZCBF satisfies an inequality, which differs from the previous results in \cite{prajna2007,santoyo2019,wisniewski2021,clark2021,wang2021,bai2022,nejati2022} because the inequality directly includes the diffusion coefficients. { In the procedure, we also provide control design strategies using AS-RCBF/AS-ZCBF and our stochastic ZCBF.} In addition, we demonstrate our stochastic ZCBF is available for stochastic systems including input constraints by simple examples.

{The rest of this paper is organized as follows. In} Section~\ref{sec:preliminary}, we define mathematical notations, a target system, and a global solution used in this paper. 
In Section~\ref{sec:motivation}, by considering a simple example, we confirm that a stochastic system is generally difficult to have a safe set invariance with probability one. 
In Section~\ref{sec:main}, first, we propose an {AS-RCBF and an AS-ZCBF} ensuring the invariance of a safe set with probability one. Second, we design a safety-critical control ensuring the existence of an AS-RCBF and an AS-ZCBF and show that the controller diverges towards the boundary of a safe set. Third, we construct a new type of a {\it stochastic ZCBF} clarifying a probability for the invariance of a safe set and showing the convergence of a specific expectation related to the attractiveness of a safe set from the outside of the set. 
In Section~\ref{sec:example}, we confirm the usefulness of the proposed functions and the control design via simple examples with numerical simulation. 
Section~\ref{sec:conclusion} concludes this paper. 

\section{Preliminary}\label{sec:preliminary}

\subsection{Notations}

Let $\R^n$ be an $n$-dimensional Euclidean space and especially $\R := \R^1$. A Lie derivative of a smooth mapping ${y}: \R^n \to \R$ in a mapping $F = (F_1,\ldots,F_q): \R^n \to \R^{n \times q}$ with $F_1,\ldots,F_q: \R^n \to \R^n$ is denoted by 
\begin{align}
\lie{F}{{y}}(x) = \left( \pfrac{{y}}{x} F_1(x), \ldots, \pfrac{{y}}{x} F_q(x) \right).
\end{align}
For constants $a,b>0$, a continuous mapping $\alpha:[-b,a] \to \R$ is said to be an extended class $\mathcal{K}$ function if it is strictly increasing and satisfies $\alpha(0)=0$. { A class $\mathcal{K}$ function $\alpha$ is said to be of $\mathcal{K}_\infty$ if $\lim_{s\to\infty}\alpha(s)=\infty$. If a function $\alpha:\R^n \to \R$ is continuously differentiable for $r$-times, we state it as ``$\alpha$ is $C^r$.''} The boundary of a set $\mathcal{A}$ is denoted by $\partial \mathcal{A}$.

Let $(\Omega,\mathcal{F},\{\mathcal{F}_t\}_{t \ge 0},\mathbb{P})$ be a filtered probability space, where $\Omega$ is the sample space, $\mathcal{F}$ is the $\sigma$-algebra {of $\Omega$}, $\{\mathcal{F}_t\}_{t \ge 0}$ is a filtration of $\mathcal{F}$ and $\mathbb{P}$ is a {probability} measure. In the filtered probability space, { $\pr{A|A_o}$ is the conditional} probability of event $A$ {conditioned on event $A_o$, $\ex{y|A_o}$ is the conditional} expectation of {the} random variable $y$ {conditioned on event $A_o$}, and {$W_t$} is a $d$-dimensional standard Wiener process. For a process ${X_t} \in \R^n$ with an initial state ${X_t}=x_0$, we often use the following notations $\pri{x_0}{A} = \pr{A|{X_0}=x_0}$ and $\exi{x_0}{y}=\ex{y|{X_0}=x_0}$. The minimum of $a,b \in \R$ is described by $a \wedge b:= \min(a,b)$. The differential form of an It\^o integral of $f:\R^n \to \R^n$ over {$W_t$} is represented by $f(x) d{W_t}$. The trace of a square matrix {{$Q$}} is denoted by $\mathrm{tr}[{{Q}}]$. 

\subsection{Target system, the related functions, and a global solution}

In this subsection, we describe a target system, the related functions frequently used throughout the paper, and the definition of a solution in global time.

The main target of this paper is the following stochastic system
\begin{align}\label{eq:sys-sto-gen}
d{X_t} &= \{ f({X_t}) + g({X_t}) (u_o({X_t}) + u{(t)}) \} dt + \sigma({X_t}) d{W_t},
\end{align}
where { $X_t \in \R^n$ is a state vector, $u_{o}:\R^n \to \R^m$ is a pre-input assumed to be a continuous state-feedback, $u \in U \subset \R^m$ is a compensator for safety-critical control, where $U$ denotes an acceptable control set, and maps $f: {\R^{n \times n}}$ and $g: \R^n \to {\R^{n \times m}}$ and} $\sigma: \R^n \to {\R^{n \times d}}$ { are all} assumed to be locally Lipschitz. The local Lipschitz condition of $f$, $g$ and $\sigma$ implies that there exists a stopping time $T>0$ such that {$(X_t)_{t<T}$} is {the} maximal solution to the system.

For simplicity, we further define some functions. For a $C^2$ mapping $y:M \to \R${, where $x \in $} $M \subset \R^n$, let{ting}
\begin{align}
&L^D_{f,g}(u,u_o(x),y(x)) := (\lie{f}{y})(x) + (\lie{g}{y})(x) (u+u_o(x)),\\
&L^I_{\sigma}(y(x)) := \frac12 \mathrm{tr} \left[ \sigma(x) \sigma(x)^T \left[ \pfrac{}{x} \left[ \pfrac{y}{x}\right]^T \right](x) \right],
\end{align}
{we consider an infinitesimal operator $\mathcal{L}$ in \cite{khasminskii2012} satisfying}
\begin{align}
&\mathcal{L}_{f,g,\sigma}(u,u_o(x),y(x)) := L^D_{f,g}(u,u_o(x),y(x)) + L^I_{\sigma}(y(x))
\end{align}
and
\begin{align}\label{eq:H}
H_{\sigma}(h(x)) := \frac12 \lie{\sigma}{h}(x) (\lie{\sigma}{h}(x))^T. 
\end{align}
For a mapping $v:M \to (0,\infty)$ smooth in $M \subset \R^n$, we often consider the relationship 
\begin{align}\label{eq:rel-bh}
-(v(x))^{-2} L^D_{f,g}(u,u_o(x),v(x)) = L^D_{f,g}(u,u_o(x),(v(x))^{-1}).
\end{align}

Moreover, based on \cite{nishimura2018automatica}, we describe the following notion meaning the existence of a global solution in forward time for the system \eqref{eq:sys-sto-gen}:
\begin{definition}[FIiP and FCiP{; a slight modification of (C2) in \cite{nishimura2018automatica}}]\label{def:fcip}
Let an open subset $M \subset \R^n$ and the system \eqref{eq:sys-sto-gen} be considered with $u=\phi(x)$, where $\phi: M \to \R^n$ is a continuous mapping. If a $C^2$ mapping $Y: M \to [0,\infty)$ { is proper; that is, for any $L \in [0,\infty)$, any sublevel set $\{x \in M | Y(x) \le L\}$ is compact}, and a continuous mapping $\psi: [0,\infty) \times (0,1) \to [0,\infty)$ both exist for every $x_0 \in M$ such that
\begin{align}\label{eq:fcip}
\pri{x_0}{\forall t \in [0,l],\ Y({X_t}) \le \psi(l,\epsilon)} \ge 1- \epsilon
\end{align}
holds for all $l \in [0,\infty)$ and all $\epsilon \in (0,1{]}$, then the system is said to be forward invariance in probability (FIiP) in $M$. In addition, if $M =\R^n$ and $Y(\cdot)=|\cdot|$, the system is said to be forward complete in probability (FCiP). \eod
\end{definition}

\begin{theorem}{\it (A slight modification of Proposition~17 in \cite{nishimura2018automatica}):}\label{thm:fcip}
Let us consider the system \eqref{eq:sys-sto-gen}, an open subset $M \subset \R^n$, a continuous mapping $\phi: M \to \R^n$ and an initial condition $x_0 \in M$. If there exists a proper and $C^2$ mapping $Y: M \to [0,\infty)$ such that
\begin{align}
\mathcal{L}_{f,g,\sigma}(\phi(x),u_o(x),Y(x)) \le c_1 Y(x) + c_2
\end{align}
is satisfied for all $x \in M$ and for some $c_1 \in [0,\infty)$ and $c_2 \in [0,\infty)$, then the system with $u=\phi(x)$ is FIiP in $M$. In addition, if $M=\R^n$, the system is FCiP. \eot
\end{theorem}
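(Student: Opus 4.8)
The plan is to reduce the claim to a single supermartingale estimate obtained from It\^o's formula and then to upgrade that estimate to a maximal inequality over $[0,l]$ by a first-passage argument. First I would localize. Since $Y$ is proper, each sublevel set $K_n := \{x \in M : Y(x) \le n\}$ is a compact subset of $M$, and $Y(x) \to \infty$ as $x$ approaches $\partial M$ (or $|x| \to \infty$ when $M=\R^n$). Introduce the first-passage times $\tau_n := \inf\{t \ge 0 : Y(x(t)) \ge n\}$, taken as the minimum with the maximal existence time of the solution. On $[0, t \wedge \tau_n]$ the path stays in $K_n$, so $f,g,\sigma,\phi$ and in particular the diffusion coefficient $\lie{\sigma}{Y}$ are bounded there; this is precisely what turns the It\^o-integral term into a genuine zero-mean martingale rather than a mere local one.

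Next I would apply It\^o's formula to the process $t \mapsto e^{-c_1 t} Y(x(t))$. Its drift equals $e^{-c_1 t}\big(\mathcal{L}_{f,g,\sigma}(\phi(x),u_o(x),Y(x)) - c_1 Y(x)\big)$, which by hypothesis is at most $c_2 e^{-c_1 t}$. Taking $\exi{x_0}{\cdot}$ over $[0,t\wedge\tau_n]$ and discarding the vanishing martingale part gives
\[
    \exi{x_0}{e^{-c_1(t\wedge\tau_n)}Y(x(t\wedge\tau_n))} \le Y(x_0) + c_2 \int_0^t e^{-c_1 s}\,ds,
\]
where the integral is read as $c_2 t$ in the degenerate case $c_1=0$.

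Then I would convert this into a bound on $\pri{x_0}{\tau_n \le l}$. By continuity of the sample path, $Y(x(\tau_n)) = n$ on the event $\{\tau_n \le l\}$, and since $Y \ge 0$ and $c_1 \ge 0$, the left-hand side at $t=l$ is at least $e^{-c_1 l} n\, \pri{x_0}{\tau_n \le l}$. Rearranging yields $\pri{x_0}{\tau_n \le l} \le n^{-1}\big(e^{c_1 l}Y(x_0) + c_2(e^{c_1 l}-1)/c_1\big)$. Because $\{\tau_n \le l\}$ coincides with $\{\sup_{t\in[0,l]} Y(x(t)) \ge n\}$, setting $n = \psi(l,\epsilon) := \epsilon^{-1}\big(e^{c_1 l}Y(x_0) + c_2(e^{c_1 l}-1)/c_1\big)$ makes the right-hand side equal to $\epsilon$, which is exactly the FIiP inequality \eqref{eq:fcip}; the displayed $\psi$ is continuous on $[0,\infty)\times(0,1)$. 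Finally, for $M=\R^n$ I would invoke properness once more: the sublevel set $\{Y \le \psi(l,\epsilon)\}$ is bounded, hence contained in a ball of some radius $R$, so $Y(x(t)) \le \psi$ forces $|x(t)| \le R$; replacing $\psi$ by this $R$ rewrites the estimate in the FCiP form with $Y(\cdot)=|\cdot|$.

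The main obstacle is the localization/martingale step: one must justify that the It\^o integral contributes nothing to the expectation, which is not automatic because $\lie{\sigma}{Y}$ need not be globally square-integrable. Properness of $Y$ is the decisive ingredient here, as it makes the stopped integrand bounded and lets $\tau_n$ increase to the exit time from $M$, so that the passage from the stopped expectation to the supremum (maximal) inequality over $[0,l]$ is legitimate.
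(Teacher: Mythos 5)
Your proof is correct and is essentially the argument the paper itself relies on: the paper proves Theorem~\ref{thm:fcip} only by deferring to Proposition~17 of \cite{nishimura2018automatica} (with $\R^n$ replaced by $M$), whose proof is exactly your Khasminskii-type scheme---localization at the compact sublevel sets of the proper function $Y$, It\^o's formula applied to $e^{-c_1 t}Y(x(t))$ to obtain a supermartingale estimate with a genuine (not merely local) martingale term, and a first-passage/Chebyshev bound producing $\psi(l,\epsilon)$ as in Definition~\ref{def:fcip}. The one point you gloss over is cosmetic: in the FCiP step the radius $R$ obtained from the bounded sublevel set $\{x : Y(x)\le\psi(l,\epsilon)\}$ need not depend continuously on $(l,\epsilon)$, but since it is nondecreasing in $\psi$ it can be majorized by a continuous function of $(l,\epsilon)$, which restores the form required by the definition.
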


Definition~\ref{def:fcip} and Theorem~\ref{thm:fcip} are the same as (C2) and Proposition~17 in \cite{nishimura2018automatica}, respectively, except for two differences; $x$ is restricted in $M$ and $Y$ is allowed to be not positive definite in this paper, while $x$ is allowed to be in $\R^n$ and $Y$ is restricted to be positive definite in the literature. Because $M$ is an open set and $Y$ is non-negative and proper, $Y(x) \to \infty$ always holds as $x \to \partial M$ \cite{nakamura2019}. The positive definiteness of $Y$ is required for stability analysis and it can be omitted for just analyzing forward invariance and completeness. Therefore, Theorem~\ref{thm:fcip} is straightforwardly proven via the proof of Proposition~17 in \cite{nishimura2018automatica} by replacing $\R^n$ by $M$.

Note that, FIiP in $M$ implies that the probability of $Y({X_t}) \to \infty$ is infinitesimal; this estimate that the solution {$X_t$} starting at $x_0 \in M$ stays in $M$ with probability $1-\epsilon$ for arbitrarily small $\epsilon$.

\section{Motivating Example}\label{sec:motivation}

\subsection{An example of safety-critical control for a deterministic system}

Firstly, let us consider a safety-critical control problem { based on Theorem~2 in \cite{ames2019}; that is,} {a}ssume that a set {$\tilde{\chi}$} $\subset \R^n$ is a superlevel set of a continuously differentiable mapping $h:\tilde{\chi} \to \R$ which satisfies $h(x) \ge 0$ for all $x \in \tilde{\chi}$, $h(x)=0$ for any $x \in \partial \tilde{\chi}$ and $\partial h/ \partial x \neq 0$ for all $x \in \tilde{\chi}$. For a system $\dot{x}=f(x)+g(x)u$, if there exists a compensator $u=\phi(x)$ such that (there exists a global solution in forward time in $\R^n$ and) there exists an extended class $\mathcal{K}_\infty$ function $\bar{\alpha}: \tilde{\chi} \to \R$ satisfying
\begin{align}\label{eq:con-cbf-ames}
L^D_{f,g}(\phi(x),{0},h) \ge -\bar{\alpha}(h(x)),
\end{align}
then any solution $x(t)$ starting at $x_0 \in$ {$\tilde{\chi}$} satisfies $x(t) \in \tilde{\chi}$ for all $t \in [0,\infty)$. 

As an example, we consider
\begin{align}\label{eq:sys-det}
\dot{x} = u,
\end{align}
where $x \in \R$, $u \in U =\R$, and $x(0)=x_0> \alpha {\ \ge\ } 0$. Now we let a safe set as $\tilde{\chi} = [\alpha,\infty)$; that is, we aim to design $u$ so that $x(t) \in \tilde{\chi}$ is satisfied for all $x_0 \in $ $\tilde{\chi}$ and all $t \ge 0$. If, for $h_s(x)=x-\alpha$ and $\gamma>0$, we design $u=\phi_{h_s}{:=-\gamma (x-\alpha),}$ then we obtain {$L^D_{0,1}(\phi_{h_s}(x),0,h_s(x)) \ge -\gamma h_s(x),$} which satisfies \eqref{eq:con-cbf-ames} by considering $h=h_s$ and $\bar{\alpha}(h_s)=\gamma h_s$. Therefore, {$\tilde{\chi}$} becomes safe by $u=\phi_{h_s}$.

The same result as above is also derived by using {$B_s(x) := (h_s(x))^{-1}=(x-\alpha)^{-1},$} provided that the function satisfies
\begin{align}\label{eq:ex-mot-rcbf}
L^D_{0,1}(\phi_{h_s}(x), {0},B_s(x)) \le \gamma B_s(x)
\end{align}
in $x \in \tilde{\chi}$; the condition is somewhat different from an RCBF in \cite{ames2019} and similar to an extended RCBF in \cite{furusawa2021,nakamura2019}. (Strictly, an extended RCBF further requires $B_s$ to be proper, and it is defined for a time-varying system{.})  Note from this discussion that the extended RCBF is inferred to be a counterpart concept to the ZCBF. Then, \eqref{eq:ex-mot-rcbf} implies that the value of the extended RCBF is allowed to be large, but is guaranteed not to be out of the safe set in finite time.

{A} ZCBF $h_s(x)$ is defined in whole $\R$ and $B_s(x)$ is bounded just inside of {$\tilde{\chi}$} $\setminus \partial $ {$\tilde{\chi}$} $\subset \R^n$. Therefore, $h_s(x)$ is generally useful in the viewpoint of robust control because modeling and measurement errors often cause an initial value outside of {$\tilde{\chi}$}.

\subsection{Trying extension of safety-critical control to a stochastic system}

In this subsection, we try to extend the discussion in the previous subsection to a stochastic system. 

A stochastic version of a CBF is discussed in \cite{clark2021} and \cite{wang2021}. Roughly speaking, Theorem~3 in \cite{clark2021} claims that, considering a stochastic system \eqref{eq:sys-sto-gen} with $x_0 \in {\tilde{\chi}}$, if the condition \eqref{eq:con-cbf-ames} is replaced by
\begin{align}\label{eq:con-zcbf-clark}
\mathcal{L}_{f,g,\sigma}(\phi(x),{0},h(x)) \ge -h(x)
\end{align}
for $x \in \tilde{\chi} \setminus \partial \tilde{\chi}$, then any solution satisfies $X_t \in \tilde{\chi}$ for all $t \ge 0$ {\it with probability one}. However, another previous result in \cite{wang2021} shows that  the first exit time of $X_t$ from $\tilde{\chi}$ has a finite value with a non-zero probability. The claim implicitly implies that the probability of exiting $\tilde{\chi}$ is generally not zero even if \eqref{eq:con-zcbf-clark} holds. 

Here, we consider the answer to the above contradiction by considering a safety-critical control for a stochastic system
\begin{align}\label{eq:sys-sto}
d{X_t} =  u dt + c d{W_t},
\end{align}
which is the same form as \eqref{eq:sys-det} except for the existence of the diffusion term $c d{W_t}$, where $c \neq 0$. 

As with the previous subsection, we consider $h_s(x)=x-\alpha$ as a candidate for a ZCBF. Because the Hessian of the function is always zero, we obtain {$L^I_{c}(h_s)= 0.$} This implies that, setting $u=\phi_{h_s}$ results in
\begin{align}
\mathcal{L}_{0,1,c} (\phi_{h_s}(x), {0},h_s(x)) \ge - \gamma h_s(x),
\end{align}
which satisfies the condition \eqref{eq:con-zcbf-clark} with $\gamma=1$. However, a solution to the resulting system {$d {X_t} = -\gamma	({X_t}-\alpha) dt + c d{W_t} $}
has a non-zero probability to escape $\tilde{\chi}$ even if $x_0 \in \tilde{\chi}$ {because the solution is 
\begin{align}
X_t=\alpha +(x_0-\alpha) e^{-\gamma t} + c \int_0^t e^{-\gamma(t-s)} dW_t
\end{align}
(see Sec.~3.5 in \cite{mao2007}). For} example, if $x_0 = \alpha$, { $X_t - \alpha$ follows the normal distribution with the mean zero and the variance $c^2(1-e^{-2\gamma t})/(2\gamma)$; that is, $X_t -\alpha$ is possible to be negative}. This example implies that the condition \eqref{eq:con-zcbf-clark} ensures {$\tilde{\chi}$} to be safe ``in probability''.

On the other hand, considering $B_s(x)=(x-\alpha)^{-1}$, the Hessian does not vanish and results in {$L^I_{c} = c^2 (x-\alpha)^{-3};$}
hence, we can estimate that $B_s$ yields an answer to the control problem different from $h_s${; that is,} a compensator $u=\phi_{B_s}(x)$ {$:= -\gamma (x-\alpha) + c^2 B_s(x)$} yields
\begin{align}
\mathcal{L}_{0,1,c} (\phi_{B_s}(x),u_o(x),B_s(x)) \le \gamma B_s(x).
\end{align}
Because the compensator diverges at $\partial $ {$\tilde{\chi}$}, it may have the potential to cage the solution $x$ in {$\tilde{\chi}$} with probability one. The answer will be given in a later section.

For a stochastic system, a subset of the state space is generally hard to be (almost sure) invariance because the diffusion coefficient is required to be zero at the boundary of the subset\footnote{The detail is discussed in\cite{nishimura2016scl}, which aims to make the state of a stochastic system converge to the origin with probability one and confine the state in a specific subset with probability one. The aim is a little like the aim of a control barrier function. {Tamba et al. make a similar argument for CBFs in \cite{tamba2021}, but their sufficient condition is more stringent.}}. To avoid the tight condition for the coefficient, we should design a state-feedback law whose value is massive, namely diverge in general, at the boundary of the subset so that the effect of the law overcomes the disturbance term. Moreover, a functional ensuring the (almost sure) invariance of the subset probably diverges at the boundary of the set as with a global stochastic Lyapunov function \cite{khasminskii2012,kushner,mao2007} and an RCBF.

The above discussion also implies that if a ZCBF is defined for a stochastic system and ensures ``safety with probability one,'' the good robust property of the ZCBF probably gets no appearance. The reason is that the related state-feedback law generally diverges at the boundary of the safe set. Hence, the previous work in \cite{wang2021} proposes a ZCBF { with analysis of exit time of 
a state from a safe set}. In the next section, we consider another way to construct a ZCBF for a stochastic system; especially, we propose two types of ZCBFs; an almost sure ZCBF (AS-ZCBF) and a stochastic ZCBF, which have somewhat different conditions compared with ZCBFs in \cite{clark2021} and \cite{wang2021}. Then, in Section~\ref{sec:example}, we confirm the usefulness of our ZCBFs for control design by a few examples with numerical simulation.

\section{Main Claim}\label{sec:main}

\subsection{Definitions of a safe set and safety for a stochastic system}
Let us define a safe set $\chi \subset \R^n$ being open, and there exists a mapping $h:\R^n \to \R$ satisfying all the following conditions:
\begin{description}
\item[(Z1)] $h(x)$ is $C^2$ {for $x \in \chi$}. 
\item[(Z2)] $h(x)$ is proper in $\chi$; that is, for any $L \in [0,\infty)$, any superlevel set $\{x \in \chi | h(x) \ge L\}$ is compact.
\item[(Z3)] The closure of $\chi$ is the $0$-superlevel set of $h(x)$; that is,
\begin{align}
&\chi = \{x \in \R^n | h(x) > 0\},\\
&\partial \chi = \{ x \in \R^n | h(x)=0\}, 
\end{align}
are both satisfied.
\end{description}
If needed, (Z2) is sometimes replaced by the following:
\begin{description}
\item[(Z2)'] $h(x)$ is proper in $\R^n$.
\end{description}

We also notice that the reciprocal function $B(x) := (h(x))^{-1}$ is often used after.

{
Here, we set some sets and stopping times used in this subsection. For $\mu > 0$, let
\begin{align}
&\label{eq:safe_mu} \chi_\mu := \{x \in \R^n | h(x) \in (0, \mu ]\} \subset \chi, \\
&\chi_{h > \mu} := \chi \setminus \chi_\mu = \{x \in \R^n | h(x) > \mu \}, \\
&\R^n_{h \le \mu}:=  \{x \in \R^n | h(x) \le \mu\},
\end{align}
be defined. For a solution to the system \eqref{eq:sys-sto-gen} with $x_0 \in \chi_\mu$, the first exit time from $\chi_\mu$ is denoted by $\tau_{0\mu}$, and for the solution with $x_0 \in \chi$, the first exit time from $\chi$ is denoted by $\tau_0$.
}

Let $p \in [0,1]$. System \eqref{eq:sys-sto-gen} is said to be {\it {transiently} safe in} $({\chi_\mu},\chi,p)$ if 
\begin{align}
{ \pri{x_0}{ \sup_{t \ge 0} h(X_{t \wedge \tau_{0\mu}}) > 0 \Big| x_0 \in \chi_\mu } \ge p} 
\end{align}
is satisfied. {Moreover, if $\tau_{0\mu}=\tau_0$ holds, the system is said to be safe in $(\chi_\mu,\chi,p)$.}

\subsection{CBFs ensuring almost sure safety}

In this subsection, we describe sufficient conditions for $h(x)$ and $B(x)$ to ensure that the target system is FIiP in $\chi$. 

{First, we consider a reciprocal type of CBF for safety with probability one:}
\begin{definition}[AS-RCBF]
Let \eqref{eq:sys-sto-gen} be considered with $\chi$ and $h(x)$ satisfying (Z1), (Z2) and (Z3). Let also $x_0 \in \chi$ be assumed. If there exist a continuous mapping $\phi: \chi \to \R^m$ and a constant $\gamma>0$ such that, for all $x \in \chi$, 
\begin{align}\label{eq:con-rcbf}
\mathcal{L}_{f,g,\sigma}(\phi(x),u_o(x),B(x)) &\le \gamma B(x) 
\end{align}
is satisfied, then $B(x)$ is said to be {\it an almost sure reciprocal control barrier function (AS-RCBF)}. \eod
\end{definition}

{The existence of an AS-RCBF ensures the target system is safe with probability one because the following theorem is derived:}
\begin{theorem}\label{thm:rcbf}
If there exists an AS-RCBF $B(x)$ for the system \eqref{eq:sys-sto-gen}, then it is FIiP in $\chi$. \eot
\end{theorem}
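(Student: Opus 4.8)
The plan is to reduce Theorem~\ref{thm:rcbf} directly to Theorem~\ref{thm:fcip} by taking the reciprocal function $B(x)=(h(x))^{-1}$ itself as the auxiliary mapping $Y$ required there, with $M=\chi$. Concretely, I would set $Y(x):=B(x)$, $c_1:=\gamma$ and $c_2:=0$, and then check the three structural hypotheses on $Y$ demanded by Theorem~\ref{thm:fcip}—non-negativity, $C^2$-regularity, and properness on $\chi$—after which the growth inequality is supplied essentially verbatim by the defining inequality \eqref{eq:con-rcbf} of an AS-RCBF.

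The regularity, sign, and growth conditions are quick. By (Z3), $h(x)>0$ for every $x\in\chi$, so $B(x)=(h(x))^{-1}>0$, giving $B:\chi\to[0,\infty)$. Since $h$ is $C^2$ on $\chi$ by (Z1) and never vanishes there, composing with the smooth map $t\mapsto t^{-1}$ on $(0,\infty)$ shows that $B$ is $C^2$ on $\chi$. The growth bound is then immediate: the AS-RCBF hypothesis asserts $\mathcal{L}_{f,g,\sigma}(\phi(x),u_o(x),B(x))\le\gamma B(x)$ for all $x\in\chi$, which is precisely $\mathcal{L}_{f,g,\sigma}(\phi(x),u_o(x),Y(x))\le c_1 Y(x)+c_2$ with the admissible choice $c_1=\gamma\ge0$ and $c_2=0\ge0$.

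The one step that needs genuine care—and the main obstacle—is verifying that $B$ is proper on $\chi$ in the sense used by Theorem~\ref{thm:fcip}, namely that its sublevel sets are compact (equivalently, as the remark following that theorem notes, $B(x)\to\infty$ as $x\to\partial\chi$). I would derive this from the properness of $h$ granted by (Z2): for any $L>0$ one has $\{x\in\chi\mid B(x)\le L\}=\{x\in\chi\mid h(x)\ge 1/L\}$, and the right-hand side is a superlevel set of $h$, hence compact by (Z2); for $L=0$ the sublevel set is empty since $B>0$. Thus every sublevel set of $B$ is compact, so $B$ is proper. The delicate point here is that the reciprocal relationship is exactly what converts the superlevel-set compactness of $h$ named ``proper'' in (Z2) into the sublevel-set compactness of $B$ required by the definition of FIiP, and one must be careful not to conflate these two opposite directions. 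Once all hypotheses of Theorem~\ref{thm:fcip} are verified for $Y=B$ on $M=\chi$, that theorem yields that \eqref{eq:sys-sto-gen} with $u=\phi(x)$ is FIiP in $\chi$, which is the assertion of Theorem~\ref{thm:rcbf}.
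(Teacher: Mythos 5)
Your proposal is correct and takes essentially the same approach as the paper: the paper's proof is the one-line observation that the AS-RCBF condition \eqref{eq:con-rcbf} together with (Z1)--(Z3) allows one to invoke Theorem~\ref{thm:fcip} with $Y=B$, $M=\chi$, $c_1=\gamma$ and $c_2=0$. The details you supply---notably converting the superlevel-set compactness of $h$ in (Z2) into the sublevel-set compactness (properness) of $B=h^{-1}$---are precisely the verifications the paper leaves implicit.
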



The above condition \eqref{eq:con-rcbf} is more relaxed than the condition of a stochastic RCBF shown in \cite{clark2021} {because the value of $\mathcal{L}_{f,g,\sigma}(\phi(x),u_o(x),B(x))$ is allowed to be large near the boundary of the safe set ($B(x) \to \infty$ as $x$ tends to $\partial \chi$ from the inner), while $\mathcal{L}_{f,g,\sigma}(\phi(x),u_o(x),B(x)) \le 0$ is required at $x \in \partial \tilde{\chi}$ in \cite{clark2021}. Moreover, our condition} is similar to the condition of an extended RCBF for a deterministic system proposed in\cite{furusawa2021,nakamura2019}. The dual notion of the AS-RCBF is defined as follows.

\begin{definition}[AS-ZCBF]
Let \eqref{eq:sys-sto-gen} be considered with $\chi$ and $h(x)$ satisfying (Z1), (Z2) and (Z3). Let also $x_0 \in \chi$ be assumed. If there exist a continuous mapping $\phi: \chi \to \R^m$ and a constant $\gamma>0$ such that, for all $x \in \chi$, 
\begin{align}\label{eq:con-zcbf}
\mathcal{L}_{f,g,\sigma}(\phi(x),u_o(x),h(x)) \ge &-\gamma h(x) + L^I_{\sigma}(h(x)) \nonumber \\
&+ (h(x))^2 L^I_{\sigma}(B(x)) 
\end{align}
is satisfied, then $h(x)$ is said to be {\it an almost sure zeroing control barrier function (AS-ZCBF)}. \eod
\end{definition}

{The above definition of an AS-ZCBF is proposed to derive the following result:}
\begin{theorem}\label{thm:zcbf}
If there exists an AS-ZCBF $h(x)$ for system \eqref{eq:sys-sto-gen}, then it is FIiP in $\chi$. \eot
\end{theorem}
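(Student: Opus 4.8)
The plan is to reduce Theorem~\ref{thm:zcbf} to the already-established Theorem~\ref{thm:rcbf} by proving that the AS-ZCBF inequality \eqref{eq:con-zcbf} for $h$ is \emph{identical} to the AS-RCBF inequality \eqref{eq:con-rcbf} written for the reciprocal $B(x)=(h(x))^{-1}$, with the same $\phi$ and $\gamma$. Since $h(x)>0$ on $\chi$ by (Z3), the map $B$ is well-defined and $C^2$ there, and $h$ satisfies (Z1)--(Z3) by hypothesis, so $B$ is an admissible candidate in the AS-RCBF definition. Once the two inequalities are matched, $B$ is an AS-RCBF and Theorem~\ref{thm:rcbf} yields FIiP in $\chi$ at once.

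First I would treat the first-order (drift) term. As $L^D_{f,g}$ is a first-order operator and $\partial B/\partial x = -(h(x))^{-2}\,\partial h/\partial x$, the relation \eqref{eq:rel-bh} with $v=h$ gives $L^D_{f,g}(\phi(x),u_o(x),B(x)) = -(h(x))^{-2}L^D_{f,g}(\phi(x),u_o(x),h(x))$.

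The second-order (diffusion) term is the main obstacle, since it requires the Hessian of $h^{-1}$ together with a trace identity. Differentiating twice gives
\begin{align}
    \pfrac{}{x}\left[\pfrac{B}{x}\right]^T = 2(h(x))^{-3}\left[\pfrac{h}{x}\right]^T\pfrac{h}{x} - (h(x))^{-2}\,\pfrac{}{x}\left[\pfrac{h}{x}\right]^T,
\end{align}
and inserting this into $L^I_\sigma(B)=\tfrac12\mathrm{tr}[\sigma\sigma^T(\cdot)]$, using $\mathrm{tr}\big[\sigma(x)\sigma(x)^T[\partial h/\partial x]^T[\partial h/\partial x]\big]=\lie{\sigma}{h}(x)(\lie{\sigma}{h}(x))^T=2H_\sigma(h(x))$ from \eqref{eq:H}, produces the key identity
\begin{align}
    (h(x))^2 L^I_\sigma(B(x)) = 2(h(x))^{-1}H_\sigma(h(x)) - L^I_\sigma(h(x)).
\end{align}

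Combining the two parts gives $\mathcal{L}_{f,g,\sigma}(\phi,u_o,B)=-(h)^{-2}\mathcal{L}_{f,g,\sigma}(\phi,u_o,h)+2(h)^{-3}H_\sigma(h)$. Multiplying the target bound $\mathcal{L}_{f,g,\sigma}(\phi,u_o,B)\le\gamma B=\gamma h^{-1}$ through by $h^2>0$ (which reverses the inequality) shows it is equivalent to $\mathcal{L}_{f,g,\sigma}(\phi,u_o,h)\ge -\gamma h + 2h^{-1}H_\sigma(h)$. Finally, substituting the key identity for $(h)^2L^I_\sigma(B)$ into the right-hand side of \eqref{eq:con-zcbf} collapses it to exactly $-\gamma h+2h^{-1}H_\sigma(h)$, so the AS-ZCBF hypothesis is precisely the AS-RCBF inequality for $B$; Theorem~\ref{thm:rcbf} then gives FIiP in $\chi$. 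The only delicate points are the sign bookkeeping in the $h^2$-multiplication and the trace identity, both routine once the Hessian is expanded.
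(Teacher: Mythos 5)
Your proof is correct, and it reaches the conclusion by the same overall reduction as the paper --- show that \eqref{eq:con-zcbf} for $h$ is equivalent to \eqref{eq:con-rcbf} for $B=(h)^{-1}$ with the same $\phi$ and $\gamma$, then invoke Theorem~\ref{thm:rcbf} --- but the mechanics differ on the diffusion term. The paper never expands the Hessian of $B$: it subtracts $L^I_{\sigma}(h(x))$ from both sides of \eqref{eq:con-zcbf} to isolate $L^D_{f,g}(\phi(x),u_o(x),h(x)) \ge -\gamma h(x) + (h(x))^2 L^I_{\sigma}(B(x))$, applies \eqref{eq:rel-bh} with $v=B$ (so that $(v)^{-1}=h$), and reassembles $\mathcal{L}_{f,g,\sigma}(\phi(x),u_o(x),B(x))\le\gamma B(x)$ in two lines, treating $L^I_{\sigma}(B)$ as an atomic quantity throughout. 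You instead compute $L^I_{\sigma}(B)$ explicitly, and your key identity $(h(x))^2 L^I_{\sigma}(B(x)) = 2(h(x))^{-1}H_{\sigma}(h(x)) - L^I_{\sigma}(h(x))$ is correct; it is in fact the reciprocal analogue of the identity \eqref{eq:htob} that the paper uses later for $B_b=e^{-bh}$ in the proof of Lemma~\ref{lem:safe-prob}. What your extra computation buys is an explicit restatement of the AS-ZCBF condition purely in terms of $h$, namely $\mathcal{L}_{f,g,\sigma}(\phi(x),u_o(x),h(x))\ge -\gamma h(x) + 2(h(x))^{-1}H_{\sigma}(h(x))$, which makes visible that the price of almost-sure safety is a gradient-squared term blowing up like $(h)^{-1}$ at $\partial\chi$ --- something the paper's shorter route leaves opaque. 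One bookkeeping slip: multiplying $\mathcal{L}_{f,g,\sigma}(\phi,u_o,B)\le \gamma (h)^{-1}$ through by $(h)^2>0$ does \emph{not} reverse the inequality; the reversal you need comes from the negative coefficient $-(h)^{-2}$ in front of $\mathcal{L}_{f,g,\sigma}(\phi,u_o,h)$ when you solve for that term, and your displayed conclusion is nonetheless the right one. Finally, your opening observation that $h$ satisfying (Z1)--(Z3) makes $B$ an admissible AS-RCBF candidate on $\chi$ is exactly what licenses the appeal to Theorem~\ref{thm:rcbf}, so the reduction is complete.
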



Next, we show a control design of $u=\phi(x)$ using an AS-RCBF and an AS-ZCBF.
\begin{corollary}\label{cor:ctrl-zcbf}
Consider the system \eqref{eq:sys-sto-gen}, the safe set $\chi$, $h(x)$ and $B(x)$ satisfying all the conditions of (Z1)--(Z3). Let
\begin{align}
&I(u_o(x),h(x)) := L^D_{f,g}(0,u_o(x),h(x)) \label{eq:I} \\
&J(h(x)) := -\gamma h(x) + (h(x))^2 L^I_{\sigma}(B(x)) \label{eq:J}
\end{align}
and
\begin{align}
&\phi_N(x) := \nonumber \\
&\left\{ \begin{array}{ll}
-\frac{I(u_o(x),h(x))-J(h(x))}{\lie{g}{h}(x) (\lie{g}{h}(x))^T} (\lie{g}{h}(x))^T, & I < J \cap \lie{g}{h} \neq 0\\
0 , & I \ge J \cup \lie{g}{h} = 0
\end{array}\right.
\end{align}
be designed. If 
\begin{align}\label{eq:ctrl-con}
\lie{f}{h}(x) > - \gamma h(x) + (h(x))^2 L^I_\sigma(B(x))
\end{align}
holds for $\lie{g}{h} = 0$, then the compensator $u=\phi_N(x)$ yields that the system \eqref{eq:sys-sto-gen} is FIiP in $\chi$. \eot
\end{corollary}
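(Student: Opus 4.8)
The plan is to show that the designed compensator $u=\phi_N(x)$ makes $h(x)$ satisfy the AS-ZCBF inequality \eqref{eq:con-zcbf} on all of $\chi$, and then invoke Theorem~\ref{thm:zcbf} to conclude that the system is FIiP in $\chi$. Since \eqref{eq:con-zcbf} is equivalent to \eqref{eq:con-zcbf2} and the right-hand side of \eqref{eq:con-zcbf2} is exactly $J(h(x))$ by \eqref{eq:J}, it suffices to verify $L^D_{f,g}(\phi_N(x),u_o(x),h(x)) \ge J(h(x))$ for every $x \in \chi$. Writing $L^D_{f,g}(\phi_N(x),u_o(x),h(x)) = I(u_o(x),h(x)) + \lie{g}{h}(x)\phi_N(x)$ by \eqref{eq:I}, the verification reduces to a case analysis matching the branches of $\phi_N$.

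First I would treat the active branch $I<J$ with $\lie{g}{h}(x)\neq 0$. Substituting the minimum-norm expression for $\phi_N(x)$, the scalar $\lie{g}{h}(x)(\lie{g}{h}(x))^T$ cancels and $\lie{g}{h}(x)\phi_N(x)=J-I$, so that $L^D_{f,g}(\phi_N(x),u_o(x),h(x))=J(h(x))$ exactly; the inequality holds with equality. Next, on the branch $I\ge J$ one has $\phi_N=0$, hence $L^D_{f,g}(\phi_N(x),u_o(x),h(x))=I\ge J$ directly. The remaining case is $\lie{g}{h}(x)=0$, where again $\phi_N=0$ and $L^D_{f,g}(\phi_N(x),u_o(x),h(x))=\lie{f}{h}(x)$; here the hypothesis \eqref{eq:ctrl-con} gives precisely $\lie{f}{h}(x)>J(h(x))$, so the inequality is satisfied. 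Combining the three cases shows that \eqref{eq:con-zcbf2} holds throughout $\chi$.

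The step I expect to be the main obstacle is confirming that $\phi_N$ is a genuine continuous mapping $\chi\to\R^m$, as required by the definition of an AS-ZCBF and therefore by Theorem~\ref{thm:zcbf}. Across the switching surface $I=J$ (with $\lie{g}{h}\neq 0$) both branches yield $\phi_N=0$, so continuity there is immediate. The delicate region is the zero set of $\lie{g}{h}$, where the denominator $\lie{g}{h}(\lie{g}{h})^T$ of the active branch degenerates. The role of \eqref{eq:ctrl-con} is exactly to rule this out: at any point with $\lie{g}{h}=0$ one has $I=\lie{f}{h}>J$, and by continuity of $I$ and $J$ the strict inequality $I>J$ persists on a neighborhood, forcing $\phi_N\equiv 0$ there and keeping $x$ away from the singular formula. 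Thus $\phi_N$ is continuous on $\chi$, $h(x)$ is an AS-ZCBF with this compensator, and Theorem~\ref{thm:zcbf} yields that the closed-loop system is FIiP in $\chi$.
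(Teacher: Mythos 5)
Your proof is correct and follows essentially the same route as the paper's: a case analysis showing that $u=\phi_N(x)$ enforces the AS-ZCBF inequality \eqref{eq:con-zcbf} (with equality on the active branch $I<J$, directly when $I\ge J$), followed by an appeal to Theorem~\ref{thm:zcbf}. Your continuity argument on the zero set of $\lie{g}{h}$ --- using \eqref{eq:ctrl-con} and continuity of $I-J$ to force $\phi_N\equiv 0$ on a neighborhood --- is the same idea as the paper's, which merely unpacks it with an explicit $\epsilon>0$ bounding $\|\lie{g}{h}(x)\,u_o(x)\|$ near such points.
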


\begin{remark}
The control design in Corollary~\ref{cor:ctrl-zcbf} is a stochastic version of the control design in \cite{tezuka2022}. As in the literature, we can probably discuss optimality of a stochastic system \eqref{eq:sys-sto} with $u=\phi_N(x)$. The issue is out of the scope of this paper; it will be left as a topic for future work. \eor
\end{remark}

\begin{remark}
If the condition \eqref{eq:con-zcbf} becomes strict; i.e., ``$\ge$'' is replaced by {``$>$''}, the additional condition \eqref{eq:ctrl-con} obviously holds. \eor
\end{remark}

\subsection{A Stochastic ZCBF {and Safety-critical Control Design}}

In this subsection, we propose a new type of a ZCBF for a stochastic system to yield a quantitative evaluation of how safe the system is from the viewpoint of probability. { Then, we propose a design procedure for constructing a state-feedback law based on our ZCBF.} 

We propose the following notion of a stochastic ZCBF: 
\begin{definition}[Stochastic ZCBF]\label{def:szcbf}
Let \eqref{eq:sys-sto-gen} be considered with $\chi$ and $h(x)$ satisfying (Z1), (Z2)' and (Z3). If there exists a continuous mapping $\phi: \R^n \to \R^m$ such that, for all $x \in \R^n_{h \le \mu}$, 
\begin{align}\label{eq:con-prob}
\mathcal{L}_{f,g,\sigma}(\phi(x),u_o(x),h(x)) \ge b H_{\sigma}(h(x)) 
\end{align}
is satisfied with some $b>0$, then $h(x)$ is said to be {\it a stochastic ZCBF}. \eod
\end{definition}

{The quantitative evaluation of the safety probability is specifically given by the following result:}
\begin{theorem}\label{lem:safe-prob}
If there exists a stochastic ZCBF $h(x)$ for the system \eqref{eq:sys-sto-gen}, then it is {transiently} safe in $(\chi_\mu,\chi,1-e^{-b h(x_0)})$. {Moreover, if \eqref{eq:con-prob} is satisfied for any $\mu>0$, then the system is safe in $(\chi,\chi,1-e^{-bh(x_0)})$. }\eot
\end{theorem}

{
Next, we show a control design of $u=\phi_s(x)$ using a stochastic ZCBF.
\begin{corollary}\label{cor:ctrl-szcbf}
Consider the system \eqref{eq:sys-sto-gen}, the safe set $\chi$ and a candidate of a stochastic ZCBF $h(x)$ satisfying all the conditions of (Z1), (Z2)' and (Z3), and $\chi_\mu$ with $\mu>0$ as with \eqref{eq:safe_mu}. Let
\begin{align}
&I_s(u_o(x),h(x)) := {\mathcal{L}}_{f,g,\sigma}(0,u_o(x),h(x)) \label{eq:I-sto} \\
&J_s(h(x)) := b H_\sigma(h(x)) \label{eq:J-sto}
\end{align}
and
\begin{align}
&\phi_s(x) := \nonumber \\
&\left\{ \begin{array}{ll}
-\frac{I_s(u_o(x),h(x))-J_s(h(x))}{\lie{g}{h}(x) (\lie{g}{h}(x))^T} (\lie{g}{h}(x))^T, & I_s < J_s \cap \lie{g}{h} \neq 0\\
0 , & I_s \ge J_s \cup \lie{g}{h} = 0
\end{array}\right.\label{eq:ctrl-szcbf-ori}
\end{align}
be designed. If
\begin{align}\label{eq:ctrl-con-prob}
\lie{f}{h}(x) + L^I_\sigma(h(x)) > b H_\sigma(h(x))
\end{align}
holds for all $x \in \chi_\mu$ satisfying $\lie{g}{h} = 0$, then the compensator 
\begin{align}\label{eq:ctrl-szcbf-result}
u = \left\{ \begin{array}{ll} \phi_s, & x \in \R^n_{h\le\mu}, \\ \phi'_s, & x \in \chi_{h>\mu}, \end{array} \right.
\end{align}
where $\phi'_s:\chi_{h>\mu} \to \R^m$ is continuous and satisfies $\phi'_s(x)=\phi_s(x)$ for all $x \in \partial \chi_{h>\mu}$, yields that the system \eqref{eq:sys-sto-gen} is transiently safe in $(\chi_\mu, \chi, 1-e^{-b h(x_0)})$. Moreover, if the above discussion is satisfied for any $\mu>0$, the system is safe in $(\chi,\chi,1-e^{-b h(x_0)})$. \eot
\end{corollary}
}

\begin{remark}
A characteristic feature of our stochastic ZCBF in Definition~\ref{def:szcbf} appears in the condition \eqref{eq:con-prob} that includes the diffusion coefficient $\sigma(x)$ explicitly, which differs from the main previous study such as \cite{prajna2007,santoyo2019,wisniewski2021}. { The benefit appears in the control design; for example, if $x,w \in \R$, we obtain $J_s = (\sigma(x))^2 b/2 (\partial^2 h / \partial x^2)$ that is directly included in the control law \eqref{eq:ctrl-szcbf-ori}. For many control application problems, $\sigma(x)$ has modeling errors or varies depending on the experimental environment. To deal with these situations, in \eqref{eq:ctrl-szcbf-ori}, we can redesign $\sigma(x)$ according to the assumed error or variation. } \eor 
\end{remark}

In addition, because the condition \eqref{eq:exp-szcbf}{, which will appear later in the proof of Lemma~\ref{lem:safe-prob} in Appendix~\ref{subsec:lemma-safe-prop},} implies that $B_b(x)$ is non-negative supermartingale \cite{khasminskii2012,mao2007} outside of the safe set $\chi$, sample paths approach the safe set $\chi$ in probability. More concretely, we can employ the analysis of $\mu$-zone mean-square convergence shown in \cite{poznyak2018}. Letting $\mu_b := e^{-b\mu}$ and 
\begin{align}
&[\ex{B_b({X_t})} - \mu_b]_{+} := \left\{ \begin{array}{ll}
\ex{B_b({X_t})}-\mu_b, & \ex{B_b({X_t})} \ge \mu_b \\
0, & \ex{B_b({X_t})} < \mu_b
\end{array}\right., \\
&{V}(x) := ([\ex{B_b({X_t})} - \mu_b]_{+})^2, 
\end{align}
then the following holds.

\begin{corollary}\label{cor:mu-zone}
If there exists a stochastic ZCBF $h(x)$ for \eqref{eq:sys-sto-gen}, and moreover, for all $x \in \R^n_{h \le \mu}$, the condition \eqref{eq:con-prob} is replaced by
\begin{align}\label{eq:con-prob-strict}
\mathcal{L}_{f,g,\sigma}(\phi(x),u_o(x),h(x)) > b H_{\sigma}(h(x)), 
\end{align}
then, for solutions of the system with $u=\phi(x)$ and $x_0 \in \R^n_{h \le \mu}$,
\begin{align}
\lim_{t \to \infty} {V}({X_t}) = 0
\end{align}
is satisfied. \eot
\end{corollary}

\section{Examples}\label{sec:example}

\subsection{Revisit to the motivating example}\label{subsec:ex1}
In this subsection, we revisit the motivating example dealt with in Section~\ref{sec:motivation}. Let us consider the stochastic system \eqref{eq:sys-sto}, provided that a safe set is $\chi_1=(\alpha,\infty)$ according to Section~\ref{sec:main} {and the pre-input $u_o(x)$ is added; that is, $dX_t = (u_o(X_t)+u(t)) dt + c dW_t$}.

First, we remake the { CBFs} $B_s(x)=1/(x-\alpha)$ and $h_s=x-\alpha$ so that it is proper in $\chi_1$. Referring to \cite{nakamura2019}, set
\begin{align}
p_N(x) := \frac12 (x-N)^4 + \frac12 (x-N)^3|x-N|
\end{align}
for a sufficiently large $N>\alpha$. Then, the functions
\begin{align}
&B_1(x) = \frac{1}{x-\alpha} + p_N(x),\\
&h_1(x)=(B_1(x))^{-1}= \frac{x-\alpha}{1 + (x-\alpha)p_N(x)},
\end{align}
are proper in $\chi_1$. The shape of $h_1(x)$ is shown in Fig.~\ref{fig:h1shape}. 

Here, we consider $u=\phi_N(x)$ defined in Corollary~\ref{cor:ctrl-zcbf}. If $x \ge N$, $\phi_N$ is somewhat complicated because $p_N(x) = (x-N)^4$. However, if $x < N$, the calculation results in Section~\ref{sec:motivation} can be used because $p_N(x)=0$. That is, $\phi_N$ for any $x < N${, $\phi_N= -u_o(x) + \phi_{B_s}$ satisfying $I < J$, otherwise $\phi_N=0$.} Thus, we conclude $B_1$ and $h_1$ are an AS-RCBF and an AS-ZCBF, respectively.

Next, we consider the same problem setting as above, provided that the amplitude of the input is bounded; that is, for some $U_M>0$, an extra condition {$-U_M \le u_o(x) +u \le U_M$} is considered. {Let} a compensator $u=\phi_1(x)$ by designing
{
\begin{align}
\phi_1(x) := \left\{ \begin{array}{ll}
U_M, & h_s \le 0 \cup \phi_{B_s} > U_M \\
-U_M, & h_s > 0 \cap \phi_{B_s} < - U_M \\
\phi_{B_s}(x), & h_s > 0 \cap |\phi_{B_s}| \le U_M 
\end{array}\right. 
\end{align}
if $I<J$, otherwise, $\phi_1(x)=0$ if $I \ge J$} and $h_1$ be considered as a candidate of a stochastic ZCBF. {Then,} we obtain 
\begin{align}
{\mathcal{L}_{0,1,c}}(\phi_1(x),{0},h_1) {\ge b_1} H_{c}(h_1(x)) 
\end{align}
with {$b_1 = 2/\mu_1 + 2 \gamma \mu_1/c^2$ and $\mu_1 < c/\sqrt{\gamma}$.} 
The above results imply that $h_1$ is a stochastic ZCBF and the system is {transiently} safe in {$(\chi_{\mu_1},\chi,1-e^{-b_1 h_1(x_0)})$}. 

{
On the other hand, using Corollary~\ref{cor:ctrl-szcbf}, $\phi_s(x)= b c^2/2$ is derived. Because $\mathcal{L}_{0,1,c}(u_o(x),\phi_s(x),h_1(x)) \ge b c^2 /2$ is satisfied for all $x \in \R$, the system with $u=\phi_s(x)$ is safe in $(\chi_1,\chi_1,1-e^{-b h(x_0)})$. Moreover, if the input constraint $|u| \le U_M$ exists, we have to restrict $b \in (0,2 U_M / c^2]$, which affects the safety probability $1-e^{-bh(x_0)}$.
}

\subsection{Confinement in a bounded subset}\label{subsec:ex2}

{

In this subsection, we consider a stochastic nonlinear system
\begin{align}\label{sys:chained}
dX_t = \sum_{j=1}^2 g_j(X_t) (u_{oj}(X_t) + u_j(t)) dt + G dW_t,\ j=1,2,
\end{align}
where {$|u_{oj}(x)+u_j| \le U_j$ with $j=1,2$ for some $U_1,U_2>0$ and } 
\begin{align}
g_1(x) = \begin{bmatrix}
1 \\ 0 \\ x_2
\end{bmatrix},\ g_2(x)= \begin{bmatrix}
0 \\ 1 \\  {-x_1}
\end{bmatrix},
G = \begin{bmatrix}
c_1 \\ 0 \\ c_2
\end{bmatrix},\ c_1,c_2 \in \R.
\end{align}
If $G=0$, the system is said to be a {Brockett integrator,} which is a typical model appearing in various real nonholonomic systems such as a two-wheeled vehicle robot \cite{bloch2015}. 

Let a candidate of a stochastic ZCBF $h_2$ and a safe set $\chi_2$ by
\begin{align}
&{h_2(x) = M - 2 x_3^2 + \frac12 X(x) (1+x_3^2) - 2^{\frac{x_3^2}{2}} (X(x))^{1+\frac{x_3}{2}}} \\
&\chi_2 = \{x \in \R^3 | h_2(x) > 0 \},
\end{align}
where {$X(x) = x_1^2+x_2^2$ and} $M > 0$. { Note that $h_2(x)$ is proper in $\R^3$ and $\lie{g}{h_2}\neq 0$ for all $x \in \R^3 \setminus \{0\}$; $V_2(x)=M-h_2(x)$ is a stochastic control Lyapunov function for \eqref{sys:chained} proposed in \cite{nishimura2013ieice}.} Therefore, {using Corollary~\ref{cor:ctrl-szcbf}}, we design $u=\phi_2(x)$ by
\begin{align}
\phi_2(x) := \left\{ \begin{array}{ll}
\Phi_2(x), & h(x) \le \mu_2, \\
\Phi_2(x) \frac{h_2(x)-M'}{\mu-M'}, & h(x) \in (\mu_2, M'),\\
0, & h(x) \ge M',
\end{array}\right.
\end{align}
where { $\Phi_2(x)$ is designed by the same way to $\phi_s(x)$ in \eqref{eq:ctrl-szcbf-ori} with $b=b_2>0$, $0 < \mu_2 \le M' < M$} and $\mu_2$ is designed so that $|\Phi_{2j}(x)| \le U_{2j}$ is satisfied for all $x \in \{ x \in \R^3 | h(x) = \mu_2\}$ and $j=1,2$. 

Applying the safety-critical control $u=\phi_2(x)$ to the system \eqref{sys:chained}, we obtain \eqref{eq:con-prob} with $b=b_2$. Therefore, we conclude that the resulting system is {transiently} safe in {$(\chi_{\mu_2},\chi_2,1-e^{-b_2h(x_0)})$}.

{Moreover, we consider $u=\Phi(x)$ for all $x \in \chi$ without input constraints. Because $\lie{g}{h_2}(x)=0$ holds for only $x=0$, we calculate \eqref{eq:ctrl-con-prob} just for $x=0$ and obtain $c_1^2 - 4 c_2^2 > 0$; if the inequality is satisfied, the system \eqref{sys:chained} with $u=\Phi(x)$ is safe in $(\chi_2,\chi_2,1-e^{-b_2 h(x_0)})$.
}

\begin{figure}[!t]
\begin{minipage}[t]{0.48\hsize}
\centering
\includegraphics[keepaspectratio, scale=0.3]{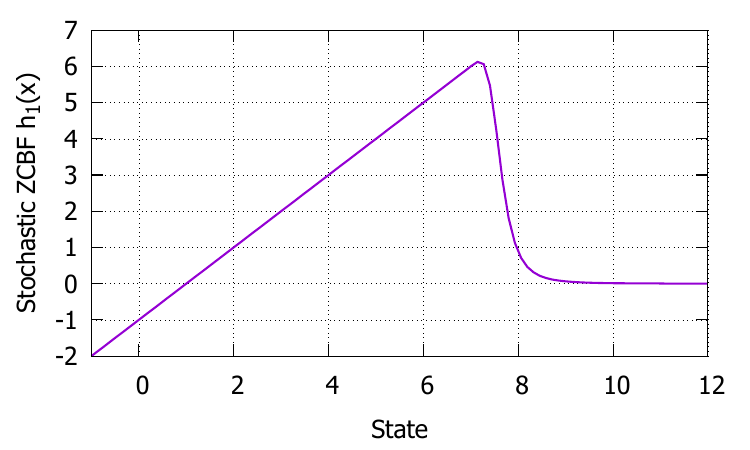}
\subcaption{The shape of $h_1(x)$ with $\alpha=1$ and $N=7$.}
\label{fig:h1shape}
\end{minipage}
\begin{minipage}[t]{0.48\hsize}
\centering
\includegraphics[keepaspectratio, scale=0.2]{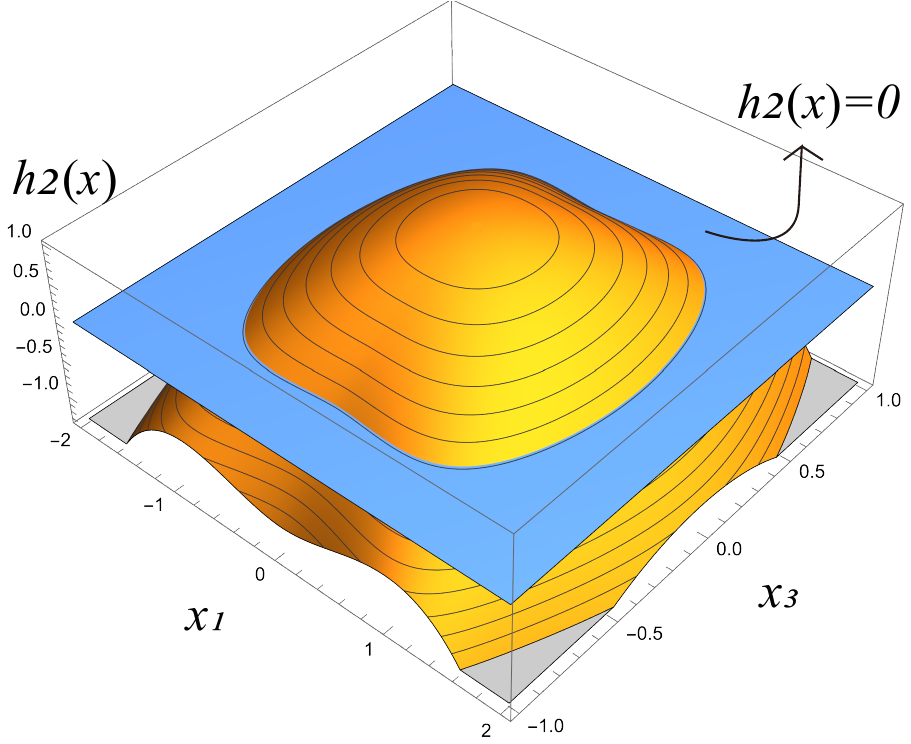}
\subcaption{The shape of $h_2(x)$ for {$x_2=0$ with $M=1$.}}
\label{fig:h2shape}
\end{minipage} \\
\caption{The shape of stochastic ZCBFs $h_1(x)$ and $h_2(x)$. The safe sets are all states for which $h_1(x)>0$ and $h_2(x)>0$, respectively. From the figures, we can see that $h_1(x)$ and $h_2(x)$ are proper, respectively.}
\end{figure}

\subsection{Numerical simulation}
In this subsection, we confirm the validity of the derived compensators for $\chi_1$ and $\chi_2$ by computer simulation.

\begin{example}\label{ex:ex1sim}
Consider the system \eqref{eq:sys-sto} with the safe set $\chi_1$ and the compensator $u=\phi_1$ discussed in Subsection~\ref{subsec:ex1}. Letting $\alpha=1$, {$\gamma = 0.5$}, $c=0.1${, $U_M=1$ and $\mu_1 = 0.13$}, we obtain {$x_{\mu_1} =1.13$ and $b_1 = 3.0$}. The value of $N$ does not affect computer simulation if we design it massively; for example, we set $N=10^{10}$. Then, { setting $x_0=1.06$,} the system \eqref{eq:sys-sto} is { transiently} safe in {$(\{ x \in (1, x_{\mu_1}) \}, \{ x > 1 \}, 0.96)$}. 
{The} compensator $\phi_1$ is illustrated as in Fig.~\ref{fig:u1}. {The} simulation results of time responses of the state $x$, the compensator $u$ and the pre-input $u_o$, and the ZCBF $h_1$ are described in Figs.~\ref{fig:ex1-x}, \ref{fig:ex1-u} and \ref{fig:ex1-zcbf}, respectively. In the simulation, we calculate ten times sample paths in the grey lines, the average of the paths in the red line, the results for the deterministic system (i.e., $\sigma'=0$ ) in the blue line, and the pre-input $u_o$ in the green line.
\end{example}

\begin{example}\label{ex:ex2sim}
{
Consider the system \eqref{sys:chained} with the safe set $\chi_2$ and the compensator $u=\phi_2$ discussed in Subsection~\ref{subsec:ex2}. Letting {$c_1=c_2=0.1$, $M=1$, $M'=0.95$, $b_2=4$, $\mu_2=0.83$ and $x_0=(0.5,0.5,0.2)^T$}, the system \eqref{sys:chained} is {transiently} safe in $( \chi_{\mu_2}, \chi_2,{0.92})$. {Assuming $u_o=(1,1)^T$}, the simulation results of the time responses of the compensators $u_1$ and $u_2$ are described in Figs.~\ref{fig:ex2-u1} and \ref{fig:ex2-u2}, respectively, and the time responses of the ZCBF $h_2$ are described in Fig.~\ref{fig:ex2-zcbf}. The colors of the lines have the same roles as in Example~\ref{ex:ex1sim}. 
}
\end{example}

In the simulation results, the safety is achieved better than the estimation of Theorem~\ref{lem:safe-prob}; note that the theorem ensures the minimum probability of leaving safe sets. The results may imply that, for actual control problems influenced by white noises, the designed compensators have good performances as safety-critical control.

\begin{figure}[!t]
\begin{tabular}{cc}
\begin{minipage}[t]{0.45\hsize}
\centering
\includegraphics[keepaspectratio, scale=0.3]{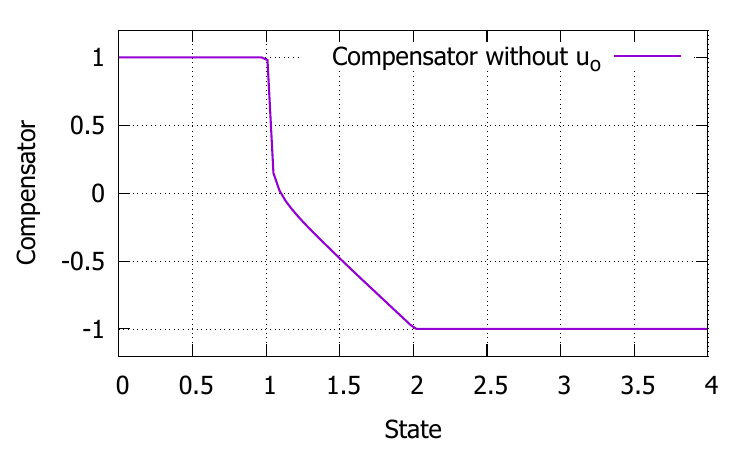}
\subcaption{The shape of the continuous state-feedback $\phi_1$.}
\label{fig:u1}
\end{minipage} &
\begin{minipage}[t]{0.45\hsize}
\centering
\includegraphics[keepaspectratio, scale=0.3]{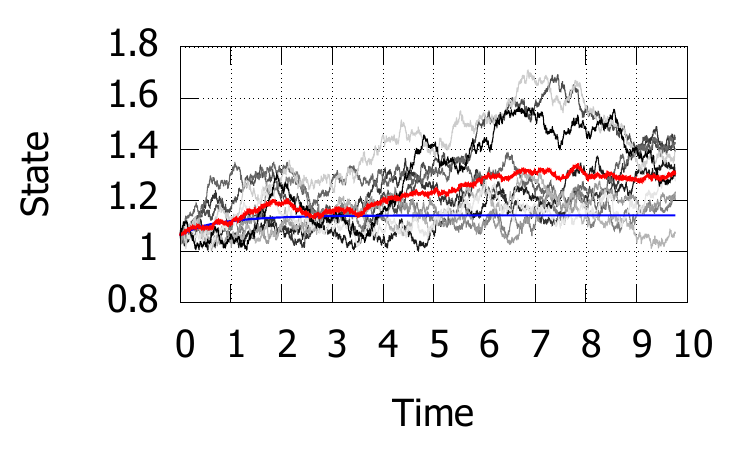}
\subcaption{Time responses of $x$.}
\label{fig:ex1-x}
\end{minipage} \\
\begin{minipage}[t]{0.45\hsize}
\centering
\includegraphics[keepaspectratio, scale=0.3]{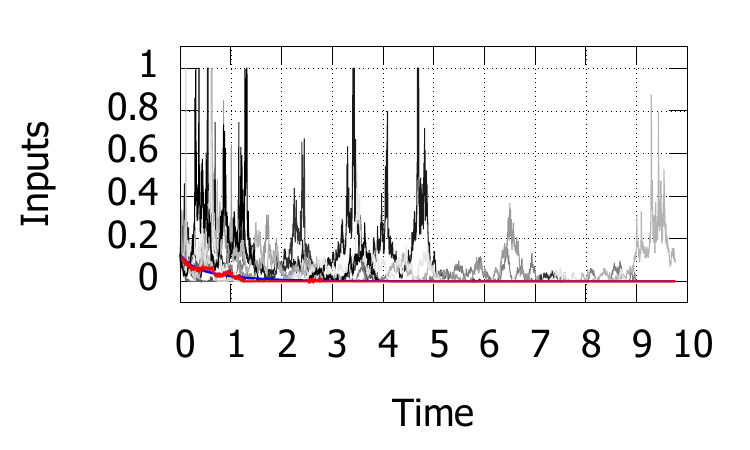}
\subcaption{Time responses of $\phi_1$.}
\label{fig:ex1-u}
\end{minipage} &
\begin{minipage}[t]{0.45\hsize}
\centering
\includegraphics[keepaspectratio, scale=0.3]{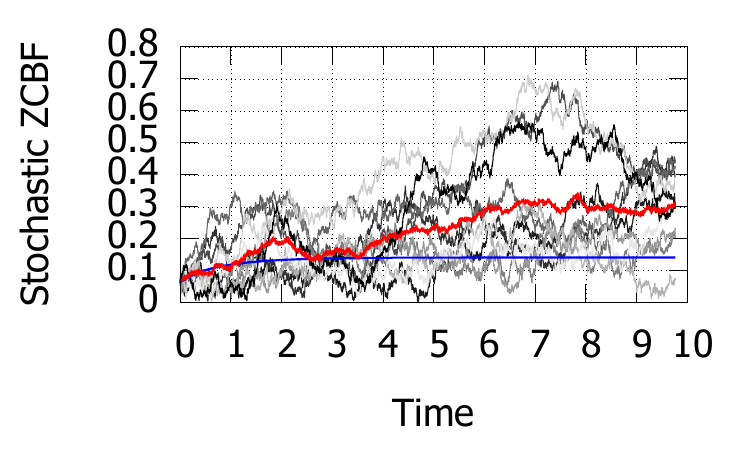}
\subcaption{Time responses of $h_1$.}
\label{fig:ex1-zcbf}
\end{minipage} 
\end{tabular}
\caption{Simulation results of Ex.~\ref{ex:ex1sim}. The grey colored lines denote 10 sample paths, the red colored lines denote the average of the paths, {and} the blue lines denote the results for the deterministic system (i.e., $\sigma'=0$ ). Fig.~\ref{fig:ex1-zcbf} shows that the safety condition $h_1(x)>0$ is satisfied in all 10 trials while the probability of the safety is {$0.96$}. }
\end{figure}

\begin{figure}[!t]
\centering
\begin{minipage}[t]{0.45\hsize}
\centering
\includegraphics[keepaspectratio, scale=0.3]{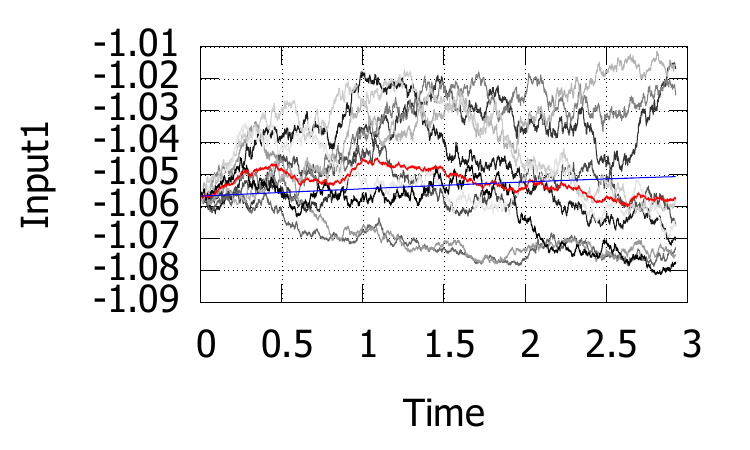}
\subcaption{Time responses of $u_1$.}
\label{fig:ex2-u1}
\end{minipage} \\
\begin{tabular}{cc}
\begin{minipage}[t]{0.45\hsize}
\centering
\includegraphics[keepaspectratio, scale=0.3]{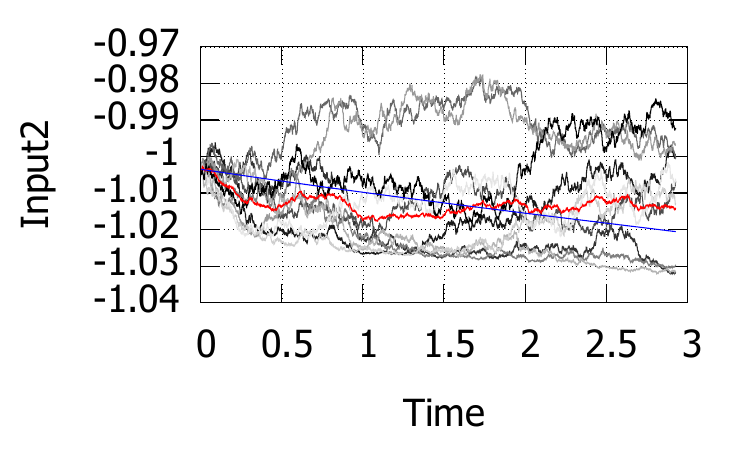}
\subcaption{Time responses of $u_2$.}
\label{fig:ex2-u2}
\end{minipage} &
\begin{minipage}[t]{0.45\hsize}
\centering
\includegraphics[keepaspectratio, scale=0.3]{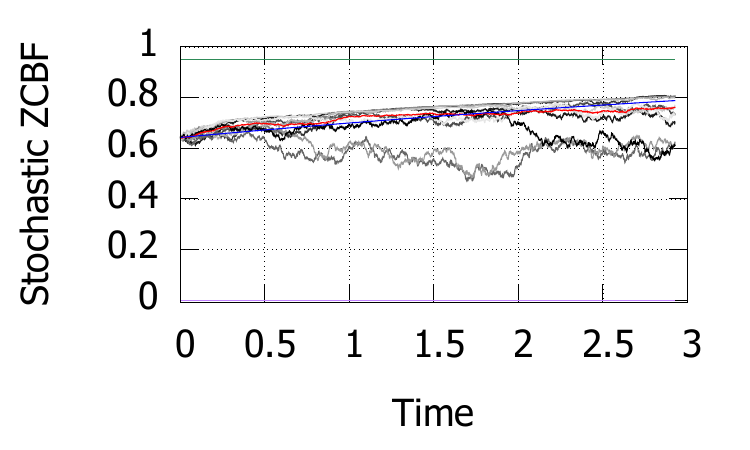}
\subcaption{Time responses of $h_2(x)$.}
\label{fig:ex2-zcbf}
\end{minipage} 
\end{tabular}
\caption{Simulation results of Ex.~2. The grey, red and blue lines are used in the same way as in Fig.~2. The purple and the green lines in Fig.~\ref{fig:ex2-zcbf} are the boundaries of {the set $\chi_{\mu_2}$}. Fig.~\ref{fig:ex2-zcbf} shows that the safety condition $h_2(x)>0$ is satisfied in { all} 10 trials while the probability of the safety is {$0.92$}.}
\end{figure}

\section{{Concluding Remarks}}\label{sec:conclusion}
In this paper, we proposed an almost sure reciprocal/zeroing control barrier function and a stochastic zeroing control barrier function for designing a safety-critical control law for a stochastic control system.  {We also show two examples to demonstrate the usefulness of the proposed method}. Because the target system is an input-affine stochastic system, the results can be extended to more general nonlinear control systems using, for example, the strategy of adding an integrator \cite{coron1991}. We also notice that the results are now effective for just an autonomous system with a state-feedback-type pre-input. The extension to a non-autonomous system with a time-varying pre-input is challenging for future work because the extension will enable us to apply our results for recent control application problems such as human-assist control \cite{furusawa2021,nakamura2019,tezuka2022}. Also, relaxing the constant that appears in the conditions for almost sure reciprocal/zeroing control barrier functions to the class $K_\infty$ function is essential, however, the relaxation requires rediscussing the existence of the solution. In addition, since the proposed method relaxes the conditions for the existence of solutions in continuous-time stochastic systems, sensitive discussions are needed to allow for discontinuous inputs, discontinuous dynamic variations, or more complex stochastic signals. Therefore, modifying the proposed method to support digital inputs, hybrid systems, and Poisson processes is a critical future task for its practical application.

{
\appendix

\section{Appendix: Proofs}

\subsection{Proof of Theorem~\ref{thm:rcbf}}
Applying the given condition \eqref{eq:con-rcbf} and (Z1)--(Z3) to Theorem~\ref{thm:fcip}, the system \eqref{eq:sys-sto-gen} is ensured to be FIiP in $\chi$. 

\subsection{Proof of Theorem~\ref{thm:zcbf}}
First, the condition \eqref{eq:con-zcbf} is transformed into
\begin{align}\label{eq:con-zcbf2}
L^D_{f,g}(\phi(x),u_o(x),h(x)) \ge -\gamma h(x) + (h(x))^2 L^I_{\sigma}(B(x)).
\end{align}
Using \eqref{eq:rel-bh} with $v=B$ and $M = \chi = \{ x \in \R^n | h(x)>0 \}$, the inequality is further transformed into
\begin{align}
-(B(x))^{-2} L^D_{f,g}(\phi(x),u_o(x),B(x)) \ge &-\gamma (B(x))^{-1} \nonumber \\
&+ (B(x))^{-2} L^I_{\sigma}(B) 
\end{align}
for $x \in M$. Thus, we obtain
\begin{align}
-(B(x))^{-2} \mathcal{L}_{f,g,\sigma}(\phi(x),u_o(x),B(x)) \ge -\gamma (B(x))^{-1},
\end{align}
which results in \eqref{eq:con-rcbf}. Therefore, the existence of an AS-ZCBF $B(x)$ ensures the system \eqref{eq:sys-sto-gen} is FIiP in $\chi$ via Theorem~\ref{thm:rcbf}.

\subsection{Proof of Corollary~\ref{cor:ctrl-zcbf}}
First, we consider the case of $\lie{g}{h} \neq 0$. If $I < J$, we obtain
\begin{align}
\mathcal{L}_{f,g,\sigma}&(\phi_N(x),u_o(x),h(x)) \nonumber \\
&= -\gamma h(x) +(h(x))^2 L^I_\sigma (B(x)) + L^I_\sigma (h(x)),
\end{align}
and if $I \ge J$, we obtain
\begin{align}
\mathcal{L}_{f,g,\sigma}&(\phi_N(x),u_o(x),h(x)) = I(u_o(x),h(x)) + L^I_\sigma (h(x)) \nonumber \\
&\ge 
-\gamma h(x) +(h(x))^2 L^I_\sigma (B(x)) + L^I_\sigma (h(x)).
\end{align}
Therefore, regardless of $I < J$ or $I \ge J$, the inequality \eqref{eq:con-zcbf} is satisfied. {Moreover, because $\lie{g}{h(x)}$, $I(u_o(x),h(x))$ and $J(h(x))$ are all continuous in $\lie{g}{h(x)} \neq 0$ and $\phi_N(x) \to 0$ as $I \to J$ uniformly when $\lie{g}{h(x)} \neq 0$, $\phi_N(x)$ is continuous in $\lie{g}{h(x)} \neq 0$. }

Then, we consider the other case, i.e., $\lie{g}{h} = 0$. 
The additional condition \eqref{eq:ctrl-con} implies that there exists a sufficiently small constant $\epsilon>0$ such that
\begin{align}
\lie{f}{h}(x) - \epsilon \ge - \gamma h(x) + (h(x))^2 L^I_\sigma(B(x))
\end{align}
is satisfied. Combining the inequality and the assumption of $u_o$ to be continuous, for a subset $G_o \subset \chi$, which is a neighborhood of $x_g \in \{ x \in \R^n | \lie{g}{h}(x)=0 \}$, 
\begin{align}
||\lie{g}{h}(x) u_o(x) || \le \epsilon
\end{align}
is satisfied. Thus, for $x \in G_o$, we obtain 
\begin{align}
\lie{f}{h}(x) + \lie{g}{h}(x) u_o(x) \ge - \gamma h(x) + (h(x))^2 L^I_\sigma(B(x)),
\end{align}
which implies that $I \ge J$; namely, $\phi_N=0$ in $G_o$. Therefore, $\phi_N$ is continuous around $\lie{g}{h}(x)=0$. 

Consequently, $\phi_N$ is always continuous in $\chi$ and satisfies all the assumptions and conditions of Theorem~\ref{thm:zcbf}. This completes the proof.

\subsection{Proof of {Theorem}~\ref{lem:safe-prob}}\label{subsec:lemma-safe-prop}
First, we prove that the existence of a stochastic ZCBF $h(x)$ ensures that the system \eqref{eq:sys-sto-gen} with $u=\phi(x)$ is FCiP. Let
\begin{align}\label{eq:func-prob}
h_b(x) := e^{bh(x)}.
\end{align}
Because 
\begin{align}
L^D_{f,g}(\phi(x),u_o(x),h_b(x)) = b h_b(x) L^D_{f,g}(\phi,u_o(x),h(x)),
\end{align}
is satisfied, \eqref{eq:con-prob} changes as follows:
\begin{align}\label{eq:con-prob3} 
L^D_{f,g}(\phi(x),u_o(x),h_b(x)) \ge b h_b(x) \left\{ b H_{\sigma}(h(x)) - L^I_{\sigma}(h(x))\right\}.
\end{align}
Moreover, letting
\begin{align}
B_b(x):=(h_b(x))^{-1}=e^{-bh(x)},
\end{align}
we obtain
\begin{align}\label{eq:htob}
L^I_{\sigma}(B_b(x)) = b B_b(x) \left\{ b H_{\sigma}(h(x)) - L^I_{\sigma}(h(x)) \right\},
\end{align}
which transforms \eqref{eq:con-prob3} into
\begin{align}\label{eq:con-prob4} 
L^D_{f,g}(\phi,u_o(x),h_b(x)) \ge (h_b(x))^2 L^I_{\sigma}(B_b(x)).
\end{align}
Therefore, remembering \eqref{eq:rel-bh} with $v=B_b$, we obtain
\begin{align}
-L^D_{f,g}(\phi,u_o(x),B_b(x)) \ge L^I_{\sigma}(B_b(x));
\end{align}
that is, 
\begin{align}\label{eq:exp-szcbf}
\mathcal{L}_{f,g,\sigma}(\phi,u_o(x),B_b(x)) \le 0,\ x \in \R^n_{h \le \mu}.
\end{align}

Here, we consider the rest space $\chi_{h > \mu}$, where the assumption (Z2)' implies that the space is bounded and $h$ is bounded from above in the space. In addition, $B_b$ is decreasing, $u_o$ is continuous, and $f$, $g$, and $\sigma$ are all locally Lipschitz. Therefore, $\mathcal{L}_{f,g,\sigma}(\phi,u_o(x),B_b(x))$ is bounded from above; that is, for sufficiently large values $c_1>0$ and $c_2>0$, we obtain
\begin{align}\label{eq:exp-szcbf-soto}
\mathcal{L}_{f,g,\sigma}(\phi,u_o(x),B_b(x)) \le c_1 B_b(x) + c_2,\ x \in \chi_{h > \mu}.
\end{align}
Considering \eqref{eq:exp-szcbf} and \eqref{eq:exp-szcbf-soto}, all the conditions of Theorem~\ref{thm:fcip} are satisfied with $Y=B_b$; that is, the system \eqref{eq:sys-sto-gen} with $u=\phi(x)$ is FCiP.

Next, going back to \eqref{eq:exp-szcbf} and applying Dynkin's formula \cite{khasminskii2012, kushner, mao2007}, provided that we restrict $x_0 \in \chi_\mu$, we obtain
\begin{align}
&\ex{B_b(X_{t\wedge \tau_{0\mu}})} - B_b(x_0) \nonumber \\
&= \mathbb{E} \Biggl[ \int_0^{t \wedge \tau_{0\mu}} \mathcal{L}_{f,g,\sigma}(\phi(X_\tau),u_o(X_\tau),B_b(X_\tau) d\tau \Biggr] \le 0.
\end{align}
{
Further considering $\pr{t \ge \tau_0} = \pr{t \ge \tau_0 \le \tau_\mu} + \pr{t \ge \tau_0 > \tau_\mu}$, $B_b(X_{t \wedge \tau_{0\mu}})=e^{-b\mu}$ for $t \ge \tau_0 > \tau_\mu$ and $\inf_{x \in \R^n \setminus \chi} B_b(x) = 1$, then we obtain
\begin{align}
\pri{x_0}{t \ge \tau_0 \le \tau_\mu} \le \exi{x_0}{B_b(X(t \wedge \tau_{0\mu}))},\ x_0 \in \chi_\mu.
\end{align}
Since $\pri{x_0}{\sup_{t \ge 0} B_b(X_{t \wedge \tau_\mu}) \ge 1 \cap \tau_0 > \tau_\mu} = 0$, we obtain
\begin{align}
\pri{x_0}{\sup_{t \ge 0}B_b(X_{t \wedge \tau_\mu}) \ge 1} &= \pri{x_0}{\sup_{t \ge 0} B_b(X_{t \wedge \tau_\mu}) \ge 1 \cap \tau_0 \le \tau_\mu} \nonumber \\
&=\pri{x_0}{t \ge \tau_0 \le \tau_\mu}.
\end{align}
Thus, we obtain
\begin{align}
\pri{x_0}{\sup_{t\ge 0}h(X_{t \wedge \tau_{0\mu}}) \le 0 } = \pri{x_0}{\sup_{t \ge 0}B_b(X_{t \wedge \tau_\mu}) \ge 1} \le B_b(x_0).
\end{align}
Therefore, the system is transiently safe in $(\chi_\mu,\chi,1-e^{-bh(x_0)})$. Moreover, if the discussion is satisfied for any $\mu>0$, we directly obtain 
\begin{align}
\pri{x_0}{\sup_{t \ge 0}B_b(X_t) \ge 1} = \pri{x_0}{t \ge \tau_0} &\le \exi{x_0}{B_b(X(t \wedge \tau_0)} \nonumber \\
&\le B_b(x_0), x_0 \in \chi.
\end{align}
}
This completes the proof.

\subsection{Proof of Corollary~\ref{cor:ctrl-szcbf}}
{
First, consider the case $\lie{g}{h} \neq 0$ in $\chi_\mu$. If $I_s < J_s$, we obtain
\begin{align}
\mathcal{L}_{f,g,\sigma}&(\phi_s(x),u_o(x),h(x)) = b H_\sigma(h(x))
\end{align}
and if $I_s \ge J_s$, we obtain
\begin{align}
\mathcal{L}_{f,g,\sigma}(\phi_s(x),u_o(x),h(x)) &= I_s(u_o(x),h(x)) \nonumber \\
&\ge J_s(h(x)) \nonumber = b H_\sigma(h(x)).
\end{align}
Therefore, regardless of $I_s < J_s$ or $I_s \ge J_s$, the inequality \eqref{eq:con-prob} is satisfied. Moreover, because $\lie{g}{h(x)}$, $I_s(u_o(x),h(x))$ and $J_s(h(x))$ are all continuous in $\lie{g}{h(x) \neq 0}$ and $\phi_s(x) \to 0$ as $I_s \to J_s$ uniformly when $\lie{g}{h(x)} \neq 0$, $\phi_s(x)$ is continuous in $\lie{g}{h(x)} \neq 0$. 
Then, we consider the other case, i.e., $\lie{g}{h} = 0$ in $\chi_\mu$. 
The additional condition \eqref{eq:ctrl-con-prob} implies that there exists a sufficiently small constant $\epsilon>0$ such that
\begin{align}
\lie{f}{h}(x) + L^I_\sigma(h(x)) - \epsilon \ge b H_\sigma(h(x))
\end{align}
is satisfied. Combining the inequality and the assumption of $u_o$ to be continuous, for a subset $G_{o\mu} \subset \chi_\mu$, which is a neighborhood of $x_g \in \{ x \in \chi_\mu | \lie{g}{h}(x)=0 \}$ \begin{align}
||\lie{g}{h}(x) u_o(x) || \le \epsilon
\end{align}
is satisfied. Thus, for $x \in G_{o\mu}$, we obtain 
\begin{align}
\lie{f}{h}(x) + L^I_\sigma(h(x)) + \lie{g}{h}(x) u_o(x) \ge b H_\sigma(h(x)),
\end{align}
which implies that $I_s \ge J_s$; namely, $\phi_s=0$ in $G_o$. Therefore, $\phi_s$ is continuous around $\lie{g}{h}(x)=0$ in $\chi_\mu$. 

Consequently, $\phi_s$ is always continuous in $\chi$ and satisfies all the assumptions and conditions of Theorem~\ref{lem:safe-prob}. Moreover, because $u=\phi'_s(x)$ is continuous in $\chi_{h>\mu}$ and $\phi'_s(x)=\phi_s(x)$ for all $x \in \partial \chi_{h>\mu}$, $u$ is continuous for all $\chi$. This completes the proof.
}

\subsection{Proof of Corollary~\ref{cor:mu-zone}}
Because the condition \eqref{eq:con-prob-strict} holds, for any $x \in \R^n_{h \le \mu}$, $B_b(x)=e^{-bh(x)}$ satisfies $\mu_b \le B_b(x)$. Using the inequality, we obtain
\begin{align}\label{eq:exp-szcbf-strict}
\mathcal{L}_{f,g,\sigma}(\phi,u_o(x),B_b(x)) < 0,\ x \in \R^n_{h \le \mu}
\end{align}
via the same way to derive \eqref{eq:exp-szcbf}. Therefore, using Dynkin's formula, we obtain
\begin{align}
\frac{dV}{dt} &= 2 [\ex{B_b(X_t)} - \mu_b]_{+} \frac{d\ex{B_b(X_t)}}{dt} \nonumber \\
&= 2 [\ex{B_b(X_t)} - \mu_b]_{+} \ex{\mathcal{L}_{f,g,\sigma}(\phi(X_t),u_o(X_t),B_b(X_t))} \nonumber \\
&<0.
\end{align}
This completes the proof.

}



\begin{thebibliography}{1}


\bibitem{ames2017}
A.~D.~Ames, X.~Xu, J.~W.~Grizzle, P.~Tabuada,
\newblock ``Control barrier function based quadratic programs for safety critical systems,''
\newblock \emph{IEEE Trans. Autom. Control}, vol.~62, no.~8, pp. 3861--3876, Aug. 2017.

\bibitem{ames2019}
A.~D.~Ames, S.~Coogan, M.~Egerstedt, G.~Notomista, K.~Sreenath, P.~Tabuada,
\newblock ``Control barrier functions: theory and applications,''
\newblock \emph{Proc. 18th Euro. Control Conf.}, pp.~3420--3431, 2019.

\bibitem{shimizu2022}
T.~Shimizu, S.~Yamashita, T.~Hatanaka, K.~Uto, M.~Mammarella and F.~Dabbene, 
\newblock ``Angle-aware coverage control for 3-D map reconstruction with drone networks,''
\newblock \emph{IEEE Control Syst. Lett.}, vol.~6, pp.~1831--1836, 2022.

\bibitem{furusawa2021}
S.~Furusawa, H.~Nakamura,
\newblock ``Human assist control considering shape of target system,'' 
\newblock \emph{Asian J. Control}, vol.~23, no.~3, pp.~1185--1194, 2021.

\bibitem{nakamura2019}
H.~Nakamura, T.~Yoshinaga, Y.~Koyama, and S.~Etoh, 
\newblock ``Control barrier function based human assist control,''
\newblock \emph{Trans. Society of Instrumental and Control Engineering}, vol.~55, no.~5, pp.~353--361, 2019.

\bibitem{tezuka2022}
I.~Tezuka and H.~Nakamura, 
\newblock ``Strict zeroing control barrier function for continuous safety assist control,''
\newblock \emph{IEEE Control Syst. Lett.}, vol.~6, pp.~2108--2113, 2022.

\bibitem{jagtap2021}
P.~Jagtap, S.~Soudjani and M.~Zamani, 
\newblock ``Formal synthesis of stochastic systems via control barrier certificates,''
\newblock \emph{IEEE Trans. Autom. Control}, vol.~66, no.~7, pp.~3097--3110, 2021.

\bibitem{salamati2023}
A.~Salamati and M.~Zamani, 
\newblock ``Safety verification of stochastic systems: a repetitive scenario approach,''
\newblock \emph{IEEE Control Syst. Lett.}, vol.~7, pp.~448--453, 2023.

\bibitem{prajna2007}
S.~Prajna, A.~Jadbabaie and G.~J.~Pappas,
\newblock ``A framework for worst-case and stochastic safety verification using barrier certificates,''
\newblock \emph{IEEE Trans. Autom. Control}, vol.~52, no.~8, pp.~1415--1428, 2007.

\bibitem{santoyo2019}
C.~Santoyo, M.~Dutreix and S.~Coogan, 
\newblock ``Verification and control for finite-time safety of stochastic systems via barrier functions,''
\newblock \emph{Proc. 2019 IEEE Conf. Control Tech. Appl.}, pp.~712--717, 2019.

\bibitem{wisniewski2021}
R.~Wisniewski and L.-M.~Bujorianu, 
\newblock ``Safety of stochastic systems: an analytic and computational approach,''
\newblock \emph{Automatica}, vol.~133, DOI: 10.1016/j.automatica.2021.109839, 2021.

\bibitem{clark2021}
A.~Clark, 
\newblock ``Control barrier functions for stochastic systems,''
\newblock \emph{Automatica}, 130, DOI: 10.106/j.automatica.2021.109688, 2021.

\bibitem{wang2021}
C.~Wang, Y.~Meng, S.~L.~Smith and J.~Liu, 
\newblock ``Safety-critical control of stochastic systems using stochastic control barrier functions,''
\newblock \emph{Proc. IEEE 60th Annu. Conf. Decis. Control}, pp. 5924--5931, 2021.

{
\bibitem{bai2022}
X.~Bai, N.~Zhan and M.~ Franzle. 
\newblock ``Reach-avoid analysis for stochastic differential equations,''
\newblock \emph{arXiv}, arXiv:2208.10752, 2022.

\bibitem{nejati2022}
A.~Nejati, S.~Soudjani, M.~Zamani.
\newblock ``Compositional construction of control barrier functions for continuous-time stochastic hybrid systems,''
\newblock \emph{Automatica}, vol.~145, 110513, 2022. 

\bibitem{sontag}
E.~D.~Sontag.
\newblock ``A `universal' construction of artstein's theorem on nonlinear stabilization,'' 
\newblock \emph{Syst. Contr. Lett.}, vol.~13, no.~2, pp.~117--123, 1989.

\bibitem{florchinger}
P.~Florchinger.
\newblock ``Feedback stabilization of affine in the control stochastic differential systems by the control lyapunov function method,''
\newblock \emph{SIAM J. Contr. Optim.}, vol.~35, no.~2, pp.~500--511, 1997.

}

\bibitem{nishimura2016scl}
Y.~Nishimura, 
\newblock ``Conditions for local almost sure asymptotic stability,''
\newblock \emph{Syst. Control Lett.}, vol.~94, pp.~19--24, 2016.

{
\bibitem{tamba2021}
T.~A.~Tamba, B.~Hu, Y.~Nazaruddin.
\newblock ``On a notion of stochastic zeroing barrier function,''
\newblock \emph{Proc. 2021 American Contr. Conf.}, pp.~1318--1321, 2021.

\bibitem{aubin1995}
J.-P.~Aubin, G.~D.~Prato.
\newblock ``Stochastic Lyapunov method,''
\newblock \emph{Nonlinear Diff. Equ. Appl.}, vol.~2, no.~4, pp.~511--525, 1995.

}

\bibitem{nishimura2018automatica}
Y.~Nishimura and H.~Ito, 
\newblock ``Stochastic Lyapunov functions without differentiability at supposed equilibria,''
\newblock \emph{Automatica}, vol.~92, no.~6, pp.~188--196, 2018.

\bibitem{khasminskii2012}
R.~Z.~Khasminskii, 
\newblock \emph{Stochastic Stability of Differential Equations}, Second Edition, 
\newblock Springer, 2012.

\bibitem{kushner}
H.~J.~Kushner, 
\newblock \emph{Stochastic Stability and Control}, 
\newblock Academic Press, 1967.

\bibitem{mao2007}
X.~Mao, 
\newblock \emph{Stochastic Differential Equations and Applications}, Second Edition, 
\newblock Woodhead Publishing, 2007.







\bibitem{poznyak2018}
A.~Poznyak, 
\newblock ``Stochastic super-twist sliding mode controller,''
\newblock \emph{IEEE Trans. Autom. Control}, vol.~63, no.~5, pp.~1538--1544, 2018.

\bibitem{coron1991}
J.~-M.~Coron,
\newblock ``Adding an integrator for the stabilization problem,''
\newblock \emph{Syst. Control Lett.}, vol.~17, pp.~89--104, 1991.

\bibitem{bloch2015}
A.~M.~Bloch.
\newblock \emph{Nonholonomic Mechanics and Control} Second Edition,
\newblock Springer, 2015.

{
\bibitem{nishimura2013ieice}
Y.~Nishimura, K.~Tanaka, Y.~Wakasa and Y. Yamashita.
\newblock ``Stochastic asymptotic stabilizers for deterministic input-affine systems based on stochastic control Lyapunov functions,''
\newblock \emph{Trans. Fund. Elec. Com. Com. Sci.}, vol.~E96-A, pp.~1695--1702, 2013.



}



\end{thebibliography}
\end{document}